\begin{document}
\theoremstyle{thm}
\newtheorem{thm}{Theorem}[section]
\newtheorem{lem}[thm]{Lemma}
\newtheorem{cor}[thm]{Corollary}
\newtheorem{pro}[thm]{Proposition}
\newtheorem{cond}[thm]{Condition}
\newtheorem{exm}[thm]{Example}
\newtheorem{conj}[thm]{Conjecture}

\theoremstyle{definition}
\newtheorem{definition}[thm]{Definition}
\newtheorem{con}[thm]{Condition}
\newtheorem{clm}[thm]{Claim}
\newtheorem{fact}[thm]{Fact}
\newtheorem{rmk}[thm]{Remark}
\newtheorem{ques}[thm]{Question}
\def\square{\hfill${\vcenter{\vbox{\hrule height.4pt \hbox{\vrule
width.4pt height7pt \kern7pt \vrule width.4pt} \hrule height.4pt}}}$}
\def\T{\mathcal T}

\newenvironment{pf}{{\it Proof:}\quad}{\square \vskip 12pt}

\title{Width of a satellite knot and its companion}

\author{Qilong Guo, Zhenkun Li}

\thanks{The first author is supported by Science Foundation of China University of Petroleum,Beijing (No.2462015YJRC034 and No.2462015YQ0604). }


\begin{abstract}
In this paper, we give a proof of a conjecture  which says that $w(K) \geqslant n^2w(J)$, where $w(.)$ is the
width of a knot, $K$ is a satellite knot with $J$ as its companion, and $n$ is the winding number of the pattern. We also show that equality holds if $K$ is a satellite knot with braid pattern.
\end{abstract}

\maketitle

\vspace*{0.5cm} {\bf Keywords}: Width, Satellite knots, Companion, Pattern, Winding number.\vspace*{0.5cm}

AMS Classification (2010): 57M25, 57M27

\section{Introduction}
Width is an important invariant of knots which is introduced by Gabai in \cite{Ga}. It gives rise to the notion of thin position (of knots), which is essentially used in Gabai's proof of property R (see \cite{Ga}) and Gordon and Luecke's proof of the knot complement conjecture (see \cite{Go}), among others. We can view width as a kind of refinement of bridge number.
It is an interesting question how those knot invariants behave under the operations of connected sum and taking satellite. For bridge number, we know that $b(K_1\# K_2)=b(K_1)+b(K_2)-1$, and $b(K) \geqslant nb(J)$, given that $K$ is a satellite knot with companion $J$ and $n$ is the wrapping number (see \cite{Sc,Sch}).
In the case of width, it is conjectured that $w(K_1\# K_2)=w(K_1)+w(K_2)-2$ and $w(K) \geqslant n^2w(J)$,
which are both similar to bridge number.
However, the first conjecture is disproved by Blair and Tomova (see \cite{bt}).
For the second one, there is a weak version conjecturing that that $w(K) \geqslant n^2w(J)$ where $n$ is the winding number instead of the wrapping number.
Zupan (\cite{Zu1,Zu2}) proves that $w(K)\geqslant 8n^2$ where $n$ is the winding number and $w(K)=q^2w(J)$, where $K$ is a $(p,q)$-cable knot with companion $J$ and $q$ acts as the winding number.
Both of his results give partial positive answers to the weak version. In this paper, we give a complete positive answer to the weak version involving winding number.
\begin{thm}
Let $K$ be a satellite knot with companion $J$, and suppose the winding number of the pattern is $n$. Then
$$w(K) \geqslant n^2w(J).$$
\end{thm}

In section 2 we introduce some basic concepts and construct a graph associated to the neighborhood of the companion; in section 3 we prove that there is a simple loop in this graph, and such a loop is unique; in section 4 we associate each knot with a word in $Z_2$, the free monoid of rank 2, and then use it to help calculate the width.

\section{Preliminaries}
First we introduce some basic definitions.

\begin{definition}
Suppose $\hat{V}$ is a standard solid torus in $S^3$, and $\hat{k}$ is a knot in $int(\hat{V})$  such that $\hat{k}$ is not contained in any 3-ball $B\subset\hat{V}$.
Let $j \subset S^3$ be a non-trivial knot and let $V=N(j)$ be the closure of a tubular neighborhood of $j$ in $S^3$. Let $f:\hat{V} \rightarrow S^3$ be an embedding such that $f(\hat{V})=V$, and let $k=f(\hat{k})$. Then $k$ is called a $satellite~knot$ with $companion$ $j$ and $pattern$ $\hat{k}$. The $winding~number$ (of the pattern) is defined to be  the algebraic intersection number (up to a sign) of the pattern with a meridian disk. Furthermore, if $K$ (or $J,\hat{K}$) is the knot type represented by $k$ (or $j,\hat{k}$), we could say that $K$ is a satellite knot with companion $J$ and pattern $\hat{K}$ without ambiguity.
\end{definition}

Regard $S^3$ as the unit sphere in $\mathbb{R}^4$, and let $\pi:\mathbb{R}^4 \rightarrow \mathbb{R}$ be the projection $(x_1,x_2,x_3,x_4)\mapsto x_4$.
In the rest of this paper, we always assume that $h=\pi|_{S^3}$. Then $h$ is a Morse function on $S^3$ with exactly two critical points. These two critical points are $h^{-1}(1)$ and $ h^{-1}(-1)$, and we call them infinite (critical) points. For each $r\in (-1,1)$, $h^{-1}(r)$ is obviously a 2-sphere, which is called a level sphere.

\begin{definition}
Let $K$ be a knot type and let $\mathcal{K}$ be the set of all knots $k \in K$ such that
\begin{itemize}
  \item $k$ does not contain the two infinite points,
  \item $h|_{k}$ is Morse, and
  \item the critical points of $h|_{k}$ are in distinct levels.
\end{itemize}

For each $k \in \mathcal{K}$, suppose all the critical values of $h|_{k}$ are $c_1<c_2<...<c_m$. Choose regular values $r_1,r_2,...,r_{m-1}$ such that $c_i<r_i<c_{i+1}$ for $i=1,2,...,m-1$, and let $\omega_i(k)=|k \cap h^{-1}(r_i)|$. Define
\[w(k)=\sum_{i=1}^{m-1}{\omega_i(k)}\]
and
$$w(K)=\min_{k \in \mathcal{K}}{w(k)}.$$
$w(K)$ is called  the $width$  of the knot type $K$. See Figure 1 for the width of trefoil.
\end{definition}

\begin{figure}
\center
\includegraphics[width=0.7\textwidth]{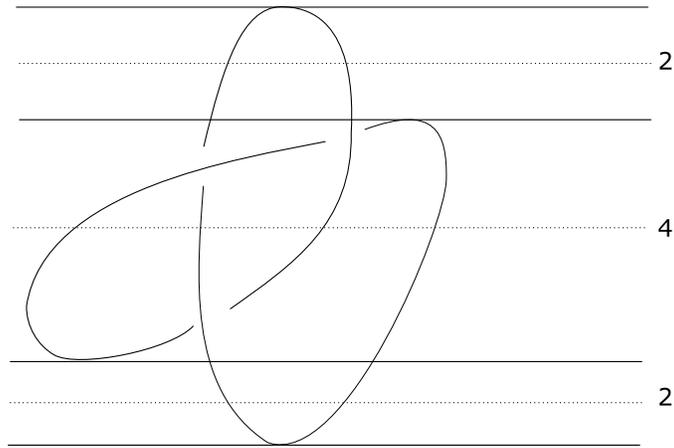}
\caption{The width of trefoil is 8}
\end{figure}


\vskip 0.3mm

In this paper, we focus on "nice" solid tori defined as follows.
\begin{definition}
Let $V$ be a solid torus in $S^3$. We say that $V$ is $nice$ if
\begin{itemize}
  \item $V$ does not contain the two infinite points,
  \item $h|_{\partial{V}}$ is also a Morse function,      and
  \item all critical points of $h|_{\partial{V}}$ are in distinct levels.
\end{itemize}
\end{definition}

If the original solid torus contains the infinities then the first condition can be achieved by 'digging them out', i.e. pick an arc connecting one infinity point and a point on the boundary $\partial V$ (disjoint from $k,j$), remove a tubular neighborhood of the arc and modify the new boundary to satisfy the other two conditions.


\begin{definition}
 Let $V$ be a nice solid torus in $S^3$. We construct a graph as follows. Let $c_1<c_2<...<c_m$ be all critical values of $h|_{\partial{V}}$, and let $M=V-( \cup_{i=1}^{m} h^{-1}(c_i))$.
Then vertices of the graph correspond to connected components of $M$ and edges correspond to components of $h^{-1}(c_i)\cap V$ for $i=1,2,...,m$, which are not points. we require that two vertices $v_1$ and $v_2$ are connected by an edge if and only if the two corresponding components of $M$ {are} separated by a component of $h^{-1}(c_i)\cap V$ which corresponds to the edge. We would like to call this graph the £¤$Reeb~graph$ and denote it by $\Gamma(V)$.
\end{definition}

We need some results from \cite{SS}. In general, critical points of $h|_{\partial{V}}$ are classified as maximal, minimal and saddle points. Following \cite{SS},  maximal (or minimal) points can be further divided into external and internal maximal (or minimal) points; saddle points can be divided into nested and unnested saddle points. We don't want to introduce the detailed definition here as they are not important for the use of this paper. Now the connectivity graph studied in \cite{SS} can be defined as follows.

\begin{definition}
Suppose $c'_1,...,c'_l$ are all critical values of $h|_{\partial{V}}$, corresponding to all external maximal, external minimal and unnested saddle points, and let $M'=V-( \cup_{i=1}^l h^{-1}(c'_i))$. If we carry out the construction in definition 2.4 using $M'$ and $c_i'$, then the graph we get is called the connectivity graph, and is denoted by $\Gamma_C{(V)}$.
\end{definition}

We can see from the definition that in order to get $M$ from $M'$, we need to cut off a second time those critical levels containing internal maximal, internal minimal and nested saddle points. Then a component of $M'$ is either unchanged or cut off into a few components. For graph this corresponds to that a vertex of $\Gamma_{C}(V)$ is either unchanged or replaced by some other graph which can be easily seen to be connected. Hence we have the following lemma:

\begin{lem}
Let $V$ be a nice solid torus, then $\Gamma(V)$ contains a loop if the connectivity graph $\Gamma_{C}(V)$ does.
\end{lem}

In this paper, we also need following results from \cite{SS}.

\begin{lem}
For any two knots $K_1,K_2$,
$w(K_1\sharp K_2)\geq max\{w(K_1),w(K_2)\}.$
\end{lem}

\begin{lem}
Let $V\subset S^3$ be a nice solid torus with boundary $T$.
Then there is an new embedding $i:V\rightarrow S^3$, satisfying following properties:

\begin{itemize}
\item $H=\overline{S^3-i(V)}$ is a solid torus.
\item The connectivity graph is a tree if and only if there is a meridian disk $D$ of $H$ such that $i^{-1}(\partial D)\subset T$ is horizontal in $T$ under $h|_T$
(i.e., $i^{-1}(\partial D)\subset (h|_T)^{-1}(r)$ for some $r$, where $r$ is a regular value of $h|_T$).
\end{itemize}

\end{lem}

{\bf Remark.} Lemma 2.7 is nothing but a special case of Proposition 2.3 in \cite{SS}.

We will also use the word 'vertex' to refer to its corresponding connected component of $M$. It is not hard to see that each vertex has a product structure $P \times (c_i,c_{i+1})$, where $P$ is a horizontal planar surface (or what we call a horizontal piece in definition 3.5) and $c_i,c_{i+1}$ are the two critical values that bound the vertex from below and above. We could embed $\Gamma(V)$ into $int(V)$ as follows.

\begin{enumerate}
  \item Pick one point in the interior of each component of $M$.

  \item For two adjacent vertices, connected the two points in the two vertices by a monotone decreasing arc in the interior of $V$.
\end{enumerate}

Then $\Gamma(V)$ can be thought of as a 1-dimensional complex in $\mathring{V}$, as in Figure 2. The solid curves are the boundary of the solid torus $V$ and the interior of $V$ is bounded by them. The dashed curves indicates the embedding of $\Gamma(V)$ into $\mathring{V}$.

\begin{figure}
\begin{center}
\includegraphics[width=0.7\textwidth]{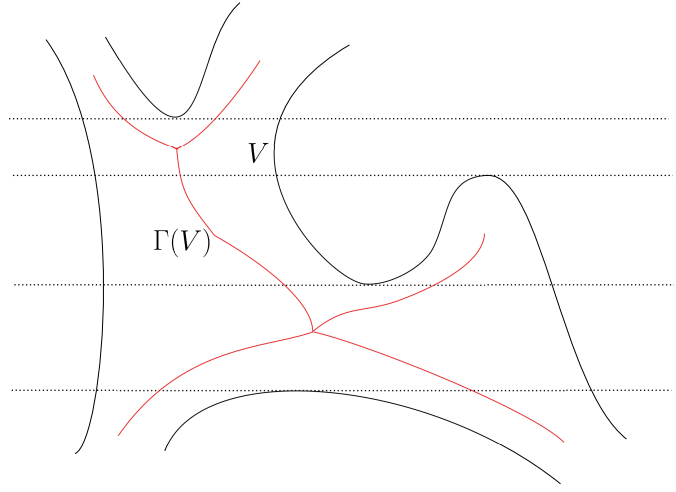}
\end{center}
\begin{center}
\caption{Embe $\Gamma(V)$ into $V$. }
\end{center}
\end{figure}

\section{$\Gamma(V)$ contains a unique simple loop}
In this section, we always assume that $V \subset S^3$ is a nice solid torus and $\Gamma(V)$ is the graph constructed as in Definition 2.4. We need some preliminary results before proving that there is a unique loop in $\Gamma(V)$.

\begin{lem}
Suppose $V$ is knotted in $S^3$, $T=\partial{V}$ and $r$ is a regular value of $h|_{T}$ such that one component of $(h|_{T})^{-1}(r)$ is an essential curve on $T$. Then at least one component $X$ of $h^{-1}(r) \cap V$ is a surface with boundary such that exactly one boundary component $\delta$ is  essential on $T$. Furthermore, $\delta$ is a meridian on $T$.
\end{lem}
\begin{proof}
Since $r$ is a regular value of $h|_{T}$, $(h|_{T})^{-1}(r)$ is {a} disjoint union of {some} simple closed curves. Note that $h^{-1}(r)$ is a sphere in $S^3$, so each component $\alpha$ of $(h|_{T})^{-1}(r)$, which is {essential in} $T$, bounds two disks on $h^{-1}(r)$. We say $\alpha$ is $innermost$ if $\alpha$ bounds a disk in $h^{-1}(r)$ which does not contain any other essential curve of $(h|_{T})^{-1}(r)$. By the innermost arguments, we can find a component $P$ of $h^{-1}(r)-T$, such that only one component of $\partial P$ is essential in $T$.

\vskip 0.3cm

{\bf Claim.}
$P \subset V$.

\vskip 0.3cm

{\it Proof of Claim.}
Suppose $P$ is not contained in $V$, then $P \cap \mathring{V}=\varnothing$. Let $\delta$ be the component of $\partial P$ which is essential in $T$, then {any} other component of $P$ bounds a disk in $T$, hence $\delta$ bounds a singular disk in $\overline{S^3-V}$. By Dehn's lemma, $\delta$ bounds an embedded disk in $\overline{S^3-V}$ and thus $T$ is compressible in $\overline{S^3-V}$, which contradicts to the assumption that $V$ is knotted.

By similar argument as above, we can see that $\delta$ bounds an embedded disk in $V$, hence $\delta$ is a meridian of $T$.
\end{proof}

\begin{cor}
Suppose $V$ is knotted and $r$ is a regular value of $h|_{T}$. Suppose $\delta$ is a component of $(h|_{T})^{-1}(r)$ which is essential in $T$, then $\delta$ is a meridian of $T$.
\end{cor}

\begin{cor}
The graph $\Gamma(V)$ is not a tree if $V$ is a knotted solid torus.
\end{cor}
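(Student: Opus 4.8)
My plan is to prove the stronger statement that the first Betti number of $\Gamma(V)$ is positive; since a tree has vanishing first Betti number, this immediately gives the corollary. Concretely I will show that $H_1(\Gamma(V)) \cong H_1(V) \cong \mathbb{Z}$. The starting point is that $V$ collapses onto $\Gamma(V)$: collapsing each component of $M$ (a vertex) to a point and each $2$-dimensional component of $V-M$ (an edge) to an arc defines a map $c\colon V \to \Gamma(V)$. By the product structure $P\times(c_i,c_{i+1})$ noted just after the construction, each vertex deformation retracts onto a horizontal planar piece $P$, so up to homotopy $c$ is the quotient that crushes every horizontal piece to a point (or arc).

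The first ingredient is the general homological fact that such a collapse induces a \emph{surjection} $c_*\colon H_1(V)\twoheadrightarrow H_1(\Gamma(V))$ whose kernel is generated by the images in $H_1(V)$ of the first homologies of the individual horizontal pieces; intuitively, a loop becomes null-homologous in $\Gamma(V)$ precisely when it can be pushed into the horizontal pieces that are crushed.

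The key point --- and the only place where the knottedness of $V$ enters --- is that each of these horizontal classes is \emph{already} trivial in $H_1(V)$. Indeed, every horizontal piece $P$ is a planar surface, so $H_1(P)$ is generated by its boundary circles, and each boundary circle is a component of $h^{-1}(r)\cap T$. By Corollary 3.5, every essential such circle is a meridian of $T$, hence bounds a meridian disk in $V$ and is null-homologous; every inessential circle bounds a disk on $T$ and is likewise null-homologous. Thus the image of $H_1(P)$ in $H_1(V)$ vanishes for every horizontal piece, so $\ker c_*$ is trivial and $c_*$ is an isomorphism. Therefore $H_1(\Gamma(V))\cong H_1(V)\cong \mathbb{Z}$, and $\Gamma(V)$ contains a cycle and is not a tree. (Note that this argument never uses the existence of an essential level curve: if none exists then all boundary circles are inessential and the conclusion is unchanged; this is consistent with the fact that for an \emph{unknotted} $V$, where essential level curves may be longitudes, $\Gamma(V)$ can be a tree.)

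The main obstacle is making the first ingredient precise. One must pin down the homotopy model relating $V$ to $\Gamma(V)$ --- for instance by viewing $V$ as a graph of spaces over $\Gamma(V)$ whose vertex and edge spaces are the horizontal pieces --- and carry out the Mayer--Vietoris (or van Kampen) computation identifying $\ker c_*$ with the subgroup generated by the homologies of those pieces. Some care is also needed with the combinatorics of the critical pieces, since a saddle component may be incident to more than two regular regions and a critical slice is itself a planar surface rather than an arc; but any loop carried by such a piece has its boundary on $T$ and is null-homologous by the same meridian argument, so these contributions do not affect the computation. Once the collapse is set up correctly the homological bookkeeping is routine, and Corollary 3.5 supplies the decisive input.
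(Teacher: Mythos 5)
Your argument is correct in outline, but it takes a genuinely different route from the paper. The paper argues by contradiction using an imported structural result (Proposition 2.3 of the Scharlemann--Schultens paper): if $\Gamma(V)$ were a tree, the complementary handlebodies of $V$ in $S^3$ would carry a nonempty complete collection of meridian disks with horizontal boundary; such a boundary curve is then either essential on $T$ (yielding a compressing disk for $T$ in the exterior, impossible since $V$ is knotted) or inessential on $T$ (hence separating on the handlebody boundary, impossible for a meridian disk). You instead stay entirely inside $V$ and compute $H_1(\Gamma(V))$ directly: crushing the horizontal (and critical) pieces gives, via Mayer--Vietoris for the decomposition of $V$ along neighborhoods of the critical levels, a surjection $H_1(V)\to H_1(\Gamma(V))$ whose kernel is carried by the pieces; since every level curve of $T$ is either inessential or, by Corollary 3.5, a meridian, each piece's $H_1$ dies in $H_1(V)$, so $H_1(\Gamma(V))\cong\mathbb{Z}$. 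The decisive input is the same dichotomy (essential level curves compress the exterior versus bound disks on $T$), but you route it through Corollary 3.5 rather than through the tree-presentation machinery. What your approach buys: it is self-contained, avoids the citation, and proves the sharper statement $b_1(\Gamma(V))=1$, which (given that $\Gamma(V)$ is connected) would also yield the uniqueness of the loop in Proposition 3.9 for free. What it costs is exactly the setup you flag as the ``main obstacle'': one must verify that the collapse map is well defined and that the paper's graph (whose edges run through critical-level components, which at a saddle can be adjacent to three regions of $M$) is homotopy equivalent to the bipartite nerve of the decomposition; this is routine but not free, and the paper's citation outsources precisely this bookkeeping. With that verification supplied, your proof is complete and valid.
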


\begin{proof}
Suppose, on the contrary, that $\Gamma(V)$ is a tree. By Lemma 2.5, the connectivity graph defined in \cite{SS} for $V$ is also a tree.
Let $i:V\rightarrow S^3$ be the embedding as in Lemma 2.7,
then there is a meridian disk $D$ of solid torus $H=\overline{S^3-i(V)}$ such that $i^{-1}(\partial D)\subset (h|_T)^{-1}(r)$ for some $r$. Obviously $i^{-1}(\partial D)$ is an essential curve in $T$, so by Corollary 3.2 $i^{-1}(\partial D)$ is a meridian of $V$, hence $\partial D$ bounds a disk in $i(V)$, which contradicts that $S^3=i(V)\cup H $.

\end{proof}

\begin{definition}
Let $l$ be a simple loop of $\Gamma$. A vertex of $l$ which is locally minimal(maximal) under $h$ is called a $minimal~(maximal)$ $vertex$.
We say that a vertex is a $critical~vertex$ if it is either minimal or maximal. A vertex which is neither minimal nor maximal is called a $vertical~vertex$.
\end{definition}

\begin{definition}
Let $r$ be a regular value of $h|_{T}$. We call each component of $h^{-1}(r) \cap V$ a $horizontal~piece$.
\end{definition}

Let $v$ be a vertex in $l$, regard $v$ as a component of $M$ (see the discussion at the end of section 2) and let $P$ be a horizontal piece in $v$. We want to describe the intersection of $l$ with $P$. If $v$ is a maximal vertex, then the two adjacent vertices in $l$ are both below $v$, and thus the part of $l$ in $v$ is an arc with one maximal point with respect to the height function $h$ and $P$ intersects $l$ in either 0,1 or 2 points. Note that the intersection at 1 point is not transversal because if we move the piece slightly above or below, then the intersection would be 0 or 2 points. A similar result holds for minimal vertices. If $v$ is vertical, then the part of $l$ in $v$ is a monotonic arc so every horizontal piece intersects $l$ exactly once.

Another observation is that the two vertices adjacent to a critical vertices must both be vertical so in any simple loop, vertical vertices must exist.

\begin{lem}
Let $l$ be a simple loop in $\Gamma(V)$. Then, as a simple closed curve in $V$, $l$ (with any orientation) represents a generator of $H_{1}(V)$.
\end{lem}

\begin{proof}
Let $v$ be a vertical vertex of $l$, and pick a horizontal piece of $v$. Then $P$ is a properly embedded surface in $V$ and intersects $l$ transversally once, {so the algebraic intersection number of $l$ and $P$ is just $\pm 1$, hence $l$ must be a generator of $H_1(V)$}.
\end{proof}

\begin{pro}
There is a unique simple loop in $\Gamma(V)$.
\end{pro}

\begin{proof}
The existence of a loop follows from Corollary 3.3. To show the uniqueness, suppose, on the contrary, there are two different simple loops $l_1$, $l_2$. If $l_1$ and $l_2$ do not have the same vertical vertices, then there is a vertical vertex $v$ with respect to one loop but not the other, say, with respect to $l_1$ but {not} $l_2$. {Then pick a generic horizontal piece in $v$ and calculate the intersection number of $l_2$ and $P$. Since the geometric intersection number is either $0$ or $2$ (see the discussion below Definition 3.5), the algebraic intersection number is never $\pm 1$ (actually is always 0). But the algebraic intersection number of $l_1$ and $P$ is $\pm 1$. So this contradicts to the fact that both $l_1$ and $l_2$ are generators of $H_1(V)$}.

Finally observe that if the two simple {loops} have same vertical vertices, then they must be the same loop. So we conclude that the simple loop must be unique.

\end{proof}

\section{The inequalities}
To calculate width, we use a technique coming from Zupan (see Section 5 in \cite{Zu2}).
Let $Z_2$ be the free monoid generated by $\{a,b\}$ and let $\phi:Z_2\rightarrow \mathbb{Z}$ be a homomorphism such that $\varphi(a)=2,\varphi(b)=-2$ and $\varphi({\alpha\beta})=\varphi(\alpha)+\varphi(\beta)$. For a word  $x=\alpha_1\alpha_2...\alpha_m\in Z_2$, where each $\alpha_j=a$ or $b$, write
$x_i=\alpha_1\alpha_2...\alpha_i\in Z_2$ and define
$$w(x)=\sum_{i=1}^{m}{\varphi(x_i)}.$$
For each knot $k \in \mathcal{K}$ (see Definition 2.2), associate a word $x=x(k)\in Z_2$ to it as follows: suppose all the critical points of $k$, from the lowest to highest, are $p_1,p_2,...,p_m$, then define $$x=x(k)=\alpha_1\alpha_2...\alpha_m,$$  where $\alpha_i=a$ if $p_i$ is a local minimal critical point and $\alpha_i=b$ if $p_i$ is a local maximal critical point. Let $w_i(k)$ be defined as in Definition 2.2. It is not hard to see that $w_i(k)=\varphi(x_i)$ and
$$w(k)=\sum_{i=1}^{m}w_i(k)=\sum_{i=1}^{m} \varphi{(x_i)}=w(x).$$

\begin{lem}
Suppose $x=\alpha_1 \alpha_2 ... \alpha_m \in F$ is a word.

(i) Suppose $\varphi(x_i) \geqslant 0$ for $i=1,2...,m$. Let $x'$ be a word obtained by deleting two letters $\alpha_i,\alpha_j$ in $F$, where $i<j$, $\alpha_i=a$ and $\alpha_j=b$, then $w(x) \geqslant w(x')$.

(ii) Suppose $x'$ is obtained from $x$ by exchanging two letters $\alpha_i,\alpha_{i+1}$ where $\alpha_i=a$ or $\alpha_{i+1}=b$, then $w(x) \geqslant w(x')$.
\end{lem}

The proof is straightforward. We call the operation on words in (i)(or in (ii)) of above lemma the type I (or II) operation. The next lemma is useful when estimating $w(k)$:

\begin{lem}
Suppose $n$ is a fixed positive integer and $x=x(\tilde{k})$ is a word associated with a knot $\tilde{k}$ has the form $\omega_1\alpha_1^{s_1}\omega_2\alpha_2^{s_2}...\omega_{m}\alpha_m^{s_m}\omega_{m+1}$, where $\omega_i=\beta_{i1}\beta_{i_2}...\beta_{it_i}$ is a word for $i=1,2,...,m+1$.
Assume that each $s_i\geq n$.

Furthermore, suppose that  $x=\alpha_1...\alpha_m$ is the word associated to another knot $\hat{l}$ and

\begin{enumerate}
\item $\varphi(\omega_1\alpha_1^{s_1}...\omega_i\alpha_i^{s_i})\geqslant n\varphi(\alpha_1...\alpha_i),~\text{if}~\alpha_i=a$,
\item $\varphi(\omega_1\alpha_1^{s_1}...\omega_{i-1}\alpha_{i-1}^{s_{i-1}}\omega_i)\geqslant n\varphi(\alpha_1...\alpha_{i-1}),~\text{if}~\alpha_i=b$,
\end{enumerate}

Then we have
$$w(\tilde{k})\geqslant n^2w(\alpha_1\alpha_2...\alpha_m)=n^2w(\hat{l}).$$
\end{lem}

\begin{proof}
Suppose $1\leqslant i\leqslant m$. If $\alpha_i=a$, we have for $0\leqslant j\leqslant n-1$,
$$\varphi(\omega_1\alpha_1^{s_1}...\omega_i\alpha_i^{s_i-j})\geqslant n\varphi(\alpha_1...\alpha_i)-2j;$$
if $\alpha_i=b$, we have for $1\leqslant j\leqslant n$,
\begin{equation*}
\begin{aligned}
\varphi(\omega_1\alpha_1^{s_1}...\omega_{i-1}\alpha_{i-1}^{s_{i-1}}\omega_i\alpha_i^{j})&\geqslant n\varphi(\alpha_1...\alpha_{i-1})-2j\\
&=n(\varphi(\alpha_1...\alpha_{i})+2)-2j\\
&=n\varphi(\alpha_1...\alpha_{i})+2n-2j.
\end{aligned}
\end{equation*}
Since the word comes from a knot, $\varphi(x_l)\geqslant 0$ for any $l$. Hence we have:
\begin{equation*}
\begin{aligned}
w(\tilde{k})&\geqslant \sum_{\alpha_i=a}\sum_{j=0}^{n-1}{\varphi(\omega_1\alpha_1^{s_1}...\omega_i\alpha_i^{s_i-j})}+\sum_{\alpha_i=b}\sum_{j=1}^{n}{\varphi(\omega_1\alpha_1^{s_1}...\omega_{i-1}\alpha_{i-1}^{s_{i-1}}\omega_i\alpha_i^{j})}\\
&\geqslant \sum_{i=1}^{m}{n^2\varphi(\alpha_1,...,\alpha_i)}+\sum_{\alpha_i=a}\sum_{j=0}^{n-1}{(-2j)}+\sum_{\alpha_i=b}\sum_{j=1}^{n}{2n-2j}\\
&=n^2w(\alpha_1,...,\alpha_n)-\sum_{\alpha_i=a}{n(n-1)}+\sum_{\alpha_i=b}{n(n-1)}.\\
&=n^2w(\alpha_1,...,\alpha_n)\\
&=n^2w(\hat{l}).
\end{aligned}
\end{equation*}
\end{proof}

Now suppose $k$ is a satellite knot with companion $j$, and let $V$ be a closed regular neighborhood of $j$ that contains $k$. Without loss of generality, we can assume that $V$ is a nice solid torus. Let $l$ be the unique loop in $\Gamma(V)$ as in Proposition 3.7, then $l$ can also be viewed as a knot in $\mathring{V}\subset S^3$.
Denote $J$ (or $K,L$) the knot type of $j$ (or $k,l$).

\begin{lem}
$w(L) \geqslant w(J).$
\end{lem}

\begin{proof}
{Picking a horizontal piece $P$ as in Lemma 3.1, and capping off all inessential boundaries of $P$ near the boundary $T$, we get a meridian disk $D$ of $V$ such that $D\cap l=P\cap l$. By Lemma 3.6, $l$ represents a generator of $H_1(V)$, so the algebraic intersection number of $l$ and $P$ is $\pm1$. Note also that $l$ intersects $P$ (and hence $D$) at most 2 points (see the discussion below Definition 3.5). So $l$ must intersect the meridian disk $D$ (transversally) only once and hence can be viewed as a composition of $j$ and possibly another knot $l'$. Then by Lemma 2.6, $w(L)\geqslant w(J)$.}
\end{proof}

\begin{lem}
Let $P$ be a horizontal piece in a vertical vertex of $l$. Then the geometric intersection number of $k$ and $P$ is no less than the winding number of the pattern $k$.
\end{lem}

\begin{proof}

Since the winding number for the pattern of $k$ is $n$, and $l$ represents a generator of $H_1(V)\cong \mathbb{Z}$, we have $[k]=\pm n[l]\in H_1(V)$. Since $P$ is a vertical vertex, algebraic intersection number (up to sign) of $\pm1$ and $l$ is 1. Consequently, $P$ and $k$ must have algebraic intersection number (up to sign) $\pm n$, hence the geometric intersection number is at least $n$.
\end{proof}

\vspace{0.3cm}

Now we will isotope $l$ into an equivalent knot $\widehat{l}$ and change $k$ into another knot $\tilde{k}$ as follows.

Suppose all the critical points of $l$, are $q_1,q_2,...,q_m$, from the lowest to the highest. Each $q_j$ corresponds to a critical vertex of $\Gamma(V)$ and hence corresponds to a component of $M=V-( \cup_{i=1}^{n} h^{-1}(c_i))$ (see Definition 2.4), denoted by $C_j$.
When $q_j$ is a local minimal point of $l$, suppose $C_j$ is bounded from above by  $h^{-1}(c_{i_j})$. Since $v_j$ has a product structure, we can move $q_j$ up to a point $\widehat{q_j}$ so that $\widehat{q_j}$ is a critical point of $h|_T$ and $h(\widehat{q_j})=c_{i_j}$. Furthermore, we can assume that no more critical points of $l$ are created. Do similar operations on local maximal points of $l$ and after all such operations, $l$ becomes a new knot $\hat{l}$. Obviously $\hat{l}$ and $l$ are equivalent knots and all the critical points of $\widehat{l}$, are $\widehat{q_1},\widehat{q_2},...,\widehat{q_m}$, from the lowest to the highest. See Figure 3 for the isotopy near a local minimal point of $l$.

\begin{figure}
\center
\includegraphics[width=0.7\textwidth]{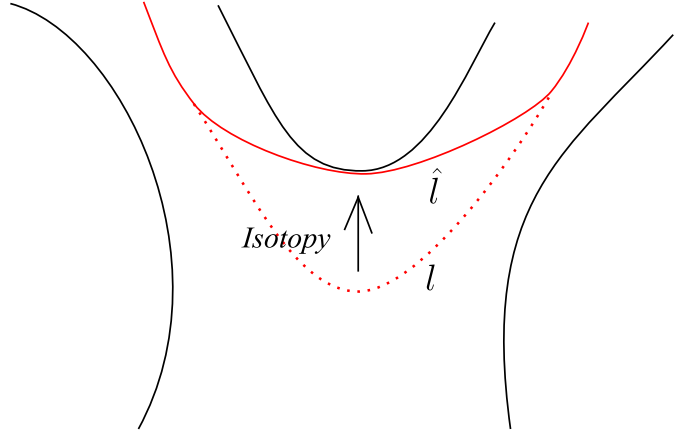}
\caption{Isotope $l$}
\end{figure}

Suppose $\widehat{q_j}$ is a local minimal point, then pick a regular value $r_{j}$ slightly larger than $c_{i_j}$ so that no other critical point of $T,\hat{l},k$ lies between two level spheres $h^{-1}(r_j)$ and $h^{-1}(c_{i_j})$. There are two horizontal pieces $P_j$ and $Q_j$ on $h^{-1}(r_{j})$ which respectively belong to the two vertical vertices adjacent to the vertex $C_j$. $P_j$ and $Q_j$ cut $k$ into arcs, and each arc of $k-P_j-Q_j$ that lies in $C_{j}$ is disjoint from any other $C_{t}$ for $t \neq j$, since $l$ is the unique simple loop. If $\beta$ is such an arc intersecting $C_j$, we can create a new arc $\gamma$ so that $\beta$ and $\gamma$ have the same end points, $\gamma$ has exactly one inner critical point which is local minimal and $\gamma$ is contained in $h^{-1}(c_{i_j},r_j)$. Then replace $\beta$ by $\gamma$ and do this repeatedly until no arc of $k-P_j-Q_j$ intersects $C_{j}$. Then we finish the operation for a particular local minimal point of $\hat{l}$.  See Figure 4. Do similar replacements for local maximal points of $\hat{l}$. After such replacement for all $m$ critical points of $\hat{l}$, $k$ becomes a new knot $\tilde{k}$. $k$ and $\tilde{k}$ may have different knot type, but it does not matter. We only need the following inequality.
\begin{figure}
\center
\includegraphics[width=0.7\textwidth]{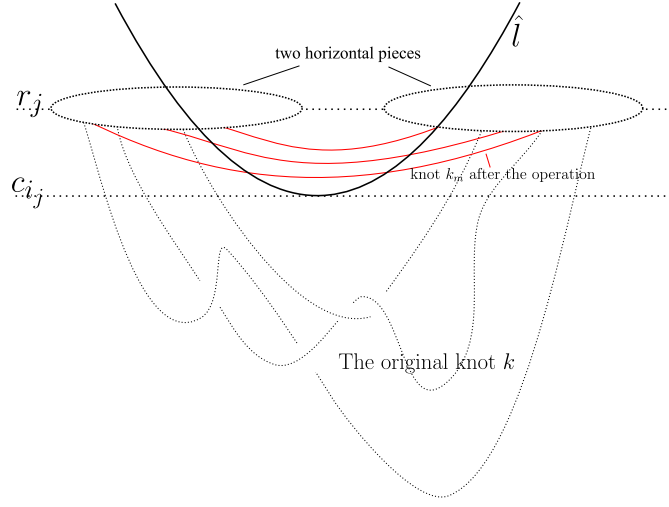}
\caption{Operation on $k$}
\end{figure}

\vspace{0.3cm}

\begin{lem}
$w(k)\geqslant w(\tilde{k})$.
\end{lem}

\begin{proof}
We study how $k$ becomes $\tilde{k}$. Let $\widehat{q_j}$ be a local minimal points of $\hat{l}$, let $P_j$, $Q_j$ be as above. Also let $\beta$ be an arc in $K-P_j-Q_j$ which has end points in $P_i\cup Q_i$ and has interior below them. Then the operation of creating $\gamma$ and replacing $\beta$ can be done by two step. the first step is to cancel pairs of maximal and minimal  points of $\beta$, to make $\beta$ has a unique critical point which is minimal. Since the interior of $\beta$ is below its two end points, we can always pair a maximal point with another minimal point which is lower. This corresponds to type I operation on words and by Lemma 4.1 will not increase width. The condition that $\phi(y_i)>0$ in Lemma 4.1 holds all the time because after cancelling each pair of points, $k$ still remains a knot. The second step is to lift the unique minimal point of $\beta$ above the level $c_{i_j}$. This corresponds to type II operation on words and will not increase width. Similar arguments apply to local maximal points of $\hat{l}$.
\end{proof}

\begin{lem}
$w(\tilde{k})\geqslant n^2w(\hat{l})$, where $n$ is the winding number.
\end{lem}

\begin{proof}
Now we need to estimate $w(\tilde{k})$. The difficulty is that we do not know every critical point of $\tilde{k}$ but only the ones near a critical point of $\hat{l}$. For a local minimal point $\widehat{q_j}$ of $\hat{l}$, let $r_i$ be a regular value slightly larger than $c_{i_j}$ as in the discussion above. Then $|h^{-1}(r_j) \cap \hat{l}|=\omega_j(\hat{l})$. Then there are exactly $\omega_j(\hat{l})$ horizontal pieces of $V$ on $h^{-1}(r_j)$, which intersect $\hat{l}$. By lemma 4.4, we have
$$|h^{-1}(r_j) \cap \tilde{k}| \geqslant n \omega_j(\hat{l}).$$
A similar argument applies when $\widehat{q_i}$ is local maximal point. The difference is that we should pick a regular level $r_i$ slightly lower than $c_{i_j}$ and hence
$$|h^{-1}(r_j) \cap \tilde{k}| \geqslant n (\omega_{j-1}(\hat{l})).$$
Note it is $\omega_{j-1}$ on the right because we pick a regular level lightly lower than the critical level $c_{i_j}$. Compare definition 2.1.

Suppose the word for $\hat{l}$ is $\alpha_1\alpha_2...\alpha_m$ then the word for $\tilde{k}$ can be written as $\omega_1\alpha_1^{s_1}\omega_2\alpha_2^{s_2}...\omega_{m}\alpha_m^{s_m}\omega_{m+1}$, where $\omega_i$ is an arbitrary word for $i=1,2,...,m+1$ and $s_i\geqslant n$ for all $i=1,2,...,m$. The argument above shows that the words for $\tilde{k}$ and $\hat{l}$ satisfies the conditions of Lemma 4.2 and hence we have $$w(\tilde{k})\geqslant n^2w(\alpha_1\alpha_2...\alpha_m)=n^2w(\hat{l}).$$
\end{proof}

\begin{thm}
Let $K$ be a satellite knot with companion $J$ and winding number $n$, Then $$w(K) \geqslant n^2w(J).$$
\end{thm}
\begin{proof}

Choose $k\in\mathcal{K}$ which realizes the width of its knot type. Without loss of generality, we can assume that $V$ is a closed regular neighborhood of $j$ which is nice and contains $k$. Furthermore we could assume all critical points of $\partial V$ and $k$ are in distinct levels.
Construct $\Gamma(V)$ and pick the unique loop $l$ by Proposition 3.7. Combining Lemma 4.3, 4.5 and 4.6, we have:
$$w(K)=w(k) \geqslant n^2w(\tilde{k}) \geqslant n^2w(\hat{l})\geqslant w(L)\geqslant w(J).$$
\end{proof}

\begin{cor}
Let $K$ be a satellite knot with knotted companion $J$ and the winding number of the pattern is $n$. If $K$ has a braid pattern,
then :
$$w(K)=n^2w(J).$$
\end{cor}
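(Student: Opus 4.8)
By the Theorem we already have $w(K)\geqslant n^2w(J)$, so the whole content of the corollary is the reverse inequality: it suffices to exhibit a single embedding $k\in K$ with $w(k)\leqslant n^2w(J)$, and equality follows at once. The plan is therefore to build one explicit thin representative of the braid satellite and to read off its width using the word calculus of Section~4.

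I would begin from a thin embedding $j$ of the companion, chosen so that $h|_j$ is Morse with critical points in distinct levels and $w(j)=w(J)$, and take $V=N(j)$ to be a very thin tubular neighborhood following $j$ closely. Since a braid pattern is by definition transverse to every meridian disk of $\hat V$, meeting each in exactly $n$ points, I would transport it into $V$ as $n$ strands running parallel to $j$, and concentrate all of the braiding inside one short sub-arc of $j$ on which $h|_j$ is monotone. Any twisting needed to correct the framing is itself a product of the braid generators $\sigma_i$, so it can be placed in the same monotone region. In that region all $n$ strands can be kept strictly monotone while they cross and twist, so the braid generators create no new critical points and do not change the number of intersection points of $k$ with any level $h^{-1}(r)$. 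Near each critical point of $j$, on the other hand, the $n$ parallel strands acquire $n$ critical points of the same type (minima over a minimum, maxima over a maximum); because $V$ is thin these cluster near the corresponding critical value of $j$ and, after a small perturbation, sit at distinct, non-interleaving levels. In the notation of the word association, if $x=\alpha_1\cdots\alpha_m$ is the word of the thin $j$, this forces the word of $k$ to be exactly $\tilde x=\alpha_1^{\,n}\alpha_2^{\,n}\cdots\alpha_m^{\,n}$.

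It then remains to evaluate $w(\tilde x)$. Writing $S_i=\varphi(x_i)=\omega_i(j)$ and summing $\varphi$ over the partial words block by block (inside the $i$-th block the running value increases from $nS_{i-1}$ to $nS_i$ in $n$ equal steps of $\varphi(\alpha_i)$), one obtains
\begin{equation}
w(\tilde x)=n^2\sum_{i=0}^{m-1}S_i+\frac{n(n+1)}{2}\sum_{i=1}^{m}\varphi(\alpha_i).
\end{equation}
Because $j$ is a closed curve it has equally many minima and maxima, so $\sum_{i=1}^{m}\varphi(\alpha_i)=\varphi(x_m)=0$; and using $S_0=S_m=0$ one has $\sum_{i=0}^{m-1}S_i=\sum_{i=1}^{m}\varphi(x_i)=w(x)=w(J)$. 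Hence $w(k)=w(\tilde x)=n^2w(J)$, which gives $w(K)\leqslant n^2w(J)$ and therefore the desired equality.

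The hard part is the geometric bookkeeping of the second paragraph rather than the arithmetic of the identity above. I would need to make precise that the braiding together with the framing correction really can be absorbed into a single monotone sub-arc without contributing to the width, and that the $n$ critical points produced at each critical point of $j$ can simultaneously be put at distinct levels and kept from interleaving with the clusters coming from the other critical points of $j$. Once this is set up, the passage to the word $\tilde x=\alpha_1^{\,n}\cdots\alpha_m^{\,n}$ is forced and the computation is the clean identity just displayed.
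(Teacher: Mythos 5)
Your proposal is correct and follows the same route as the paper: use the main theorem for one inequality, then exhibit an explicit embedding of $k$ as $n$ parallel strands in a thin neighborhood of a thin $j$ with the braiding absorbed into a monotone arc, so that the associated word is $\alpha_1^{\,n}\cdots\alpha_m^{\,n}$, and compute $w(\alpha_1^{\,n}\cdots\alpha_m^{\,n})=n^2w(x)$. The only difference is that you carry out the block-by-block evaluation of $w(\tilde x)$ explicitly (correctly), where the paper simply says ``calculate directly.''
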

\begin{proof}
By theorem 4.5, we only need to show that $w(K) \leqslant n^2w(J)$. Pick an embedding $j$ so that $j$ realize the width of its knot type. Suppose the word associated to $j$ is $x=\alpha_1\alpha_2...\alpha_t$. Embed $k$ so that its critical points are all very near some critical point of $j$ and is as less as possible. Since $k$ has a braid pattern with winding number $n$, $k$ can be embedded so that the word associated with $k$ is exactly $y=\alpha_1^n\alpha_2^n...\alpha_m^n$. {By direct calculation}, we have:
$$w(K)\leqslant w(k) =w(y)=n^2w(x)=n^2w(j)=n^2w(J).$$
\end{proof}

 \vspace{3mm}

\begin{flushleft}
Qilong Guo,\\
College of Science,
China University of Petroleum-Beijing,\\
Beijing, China, 102249;\\
$Email$: guoqilong1984@hotmail.com

Zhenkun Li, \\
Mathematics Department,
Massachusetts Institute of Technology, \\
Cambridge, MA, 02139;\\

$Email$: zhenkun@mit.edu
\end{flushleft}

\end{document}